\title{Generalised BGP reflection functors via\\the Grothendieck construction}
\author{Tobias Dyckerhoff, Gustavo Jasso and Tashi Walde}
\date{}
\begin{document}

\maketitle

\begin{abstract}
  Inspired by work of Ladkani, we explain how to construct
generalisations of the classical reflection functors of Bern{\v{s}}te{\u\i}n,
Gel{\cprime}fand and Ponomarev by means of the Grothendieck construction.

%%% Local Variables:
%%% mode: latex
%%% TeX-master: "master"
%%% End:

\end{abstract}

\section*{Introduction}

Reflection functors were introduced to representation theory by
Bern{\v{s}}te{\u\i}n, Gel{\cprime}fand, and Ponomarev in the seminal article
\cite{BGP73} to provide a more transparent proof of Gabriel's Theorem
\cite{Gab72}. After fundamental work of Brenner and Butler \cite{BB76}, the BGP
reflection functors were extended by Auslander, Platzeck and Reiten
\cite{APR79} to Artin algebras with a simple projective (resp.\ injective)
module. It was then observed by Happel \cite{Hap87} that BGP reflection functors
can be understood conceptually in terms of derived equivalences induced by what
nowadays are called APR tilting complexes. These equivalences have been used,
for example, to show that the derived category of representations of a finite
quiver whose underlying graph is a tree does not depend on its orientation.

Generalising the BGP reflection functors for tree quivers, Ladkani \cite{Lad07}
constructed equivalences between derived categories of representations of finite
posets in an arbitrary abelian category. Ladkani uses these `generalised BGP
reflection functors' to establish derived equivalences between posets arising
naturally in representation theory and combinatorics \cite{Lad07a,Lad07b,Lad08}.
In a similar spirit, abstract versions of the BGP reflection functors were
developed by Groth and \Stovicek in a series of articles \cite{GS18b,GS16,GS16a}
with the most general form of their construction appearing as the main result in
\cite{GS18a}.

In this article we construct generalised BGP reflection functors by leveraging a
general e\-qui\-va\-lence between two stable $\infty$-categories associated to
an exact functor via gluing operations, thereby unifying the aforementioned
approaches. From the perspective of semi-orthogonal decompositions \cite{BK89},
these generalised BGP reflection functors can be interpreted as a mutation
between the two decompositions associated to an admissible subcategory of a
stable $\infty$-category. Note that the mentioned gluing operations cannot be
performed with triangulated categories and thus we are required to pass to a
richer framework such as that of Lurie's stable $\infty$-categories.

In \cref{sec:Grothendieck} we recall two---equivalent---procedures to
glue stable $\infty$-categories along an exact functor. In
\cref{sec:generalised} we construct the generalised BGP reflection
functors (\cref{thm:bipartiteGroth}) and discuss applications and examples.

\begin{acknowledgements}
  The authors thank Catharina Stroppel for detailed and valuable comments on an
  earlier version of this article. T.D.\ acknowledges the support by the 
VolkswagenStiftung for his Lichtenberg Professorship at the University of 
Hamburg; T.W.\ was supported by a Hausdorff Scholarship from the Bonn International 
Graduate School (BIGS) of Mathematics. 
\end{acknowledgements}

\section{Preliminaries: gluing along an exact functor}
\label{sec:Grothendieck}

Recall that a pointed $\infty$-category $\C$ with finite limits and finite
colimits is \introduce{stable} if the suspension functor
$\Sigma_\C\colon\C\to\C$ and the loop functor $\Omega_\C\colon\C\to\C$, given by
\[%
  \Sigma_\C\colon c\mapsto\cofib(c\to0)%
  \intxt{and}%
  \Omega_\C\colon c\mapsto\fib(0\to c),%
\]%
respectively, are mutually quasi-inverse equivalences, see Proposition~1.4.2.11
in \cite{Lur17}. The following construction is the \emph{raison d'\^{e}tre} for
the use of stable $\infty$-categories in this article.

\begin{construction}
  \label{const:Grothendieck}
  Let $F\colon\B\to\A$ be an exact functor between stable $\infty$-categories.
  \begin{enumerate}
  \item Let $\seccocaGroth{F}$ be the $\infty$-category defined by the following
    pullback diagram in the (very large) $\infty$-category $\biginfcat$ of
    $\infty$-categories:
    \begin{equation}
      \label{eq:pullback_seccocaGroth}
      \begin{tikzcd}
        \seccocaGroth{F}\rar\dar{\recproj}\pb&\Fun(\Delta^1,\A)\dar\\
        \B\rar[swap]{F}&\Fun(\set{0},\A)
      \end{tikzcd}
    \end{equation}
    Thus, informally, an object of $\seccocaGroth{F}$ can be identified with a
    triple
    \[
      (b,F(b)\to a)
    \]
    where $b$ is an object of $\B$, $a$ is object of $\A$, and $F(b)\to a$ is a
    morphism in $\A$. The $\infty$-category $\seccocaGroth{F}$ is stable as it
    is a limit of stable $\infty$-categories and exact functors\footnote{The
      $\infty$-category of stable $\infty$-categories is closed under limits of
      $\infty$-categories, see Theorem~1.1.4.4 in \cite{Lur17}.}.
  \item Dually, we define $\seccaGroth{F}$ to be the (stable) $\infty$-category
    defined by the following pullback diagram in $\biginfcat$:
    \begin{equation}
      \label{eq:pullback_secGroth}
      \begin{tikzcd}
        \seccaGroth{F}\rar\dar{\recproja}\pb&\Fun(\Delta^1,\A)\dar\\
        \B\rar[swap]{F}&\Fun(\set{1},\A)
      \end{tikzcd}
    \end{equation}
    Thus, informally, an object of $\seccaGroth{F}$ can be identified with a
    triple
    \[
      (b,a\to F(b))
    \]
    where $b$ is an object of $\B$, $a$ is an object of $\A$, and $a\to F(b)$ is
    a morphism in $\A$.
  \end{enumerate}
\end{construction}

\begin{remark}
  The stable $\infty$-category $\seccocaGroth{F}$ (resp.\ $\seccaGroth{F}$)
  associated to an exact functor $F$ can be identified\footnote{See for instance
    Lemma~5.4.7.15 in \cite{Lur09}.} with the stable $\infty$-category of
  sections of the contravariant (resp.\ covariant) Grothendieck construction.
  See Section~3.2 in \cite{Lur09} for more details on the Grothendieck
  construction.
\end{remark}

\begin{mylemma}
  \label{lemma:Grothendieck}
  Let $F\colon\B\to\A$ be an exact functor between stable $\infty$-categories.
  There are mutually quasi-inverse equivalences
  \begin{equation*}
    \Rplus\colon\seccaGroth{F}\lrlas\seccocaGroth{F}\noloc\Rminus
  \end{equation*}
  which, informally, are given by
  \begin{align*}
    \Rplus\colon&%
                  \left(b,\varphi\colon a\to F(b)\right)%
                  \lmapsto\left(b,F(b)\to\cofib(\varphi)\right),%
    \\\Rminus\colon&%
                     \left(b,\psi\colon F(b)\to a\right)%
                     \lmapsto\left(b,\fib(\psi)\to F(b)\right).%
  \end{align*}
\end{mylemma}
\begin{proof}
  Consider the solid commutative diagram of stable $\infty$-categories and exact
  functors
  \[%
    \begin{tikzcd}[column sep=0, row sep=small]%
      % \phantom{\Fun(\set{0\to1},\A)}%
      &\B%
      \ar[dd,equals]%
      \ar[rr,"F" near start]%
      &&\Fun(\set{1},\A)%
      \ar[dd,equals]\\%
      \seccaGroth{F}%
      \ar[ur]%
      \ar[dd,dashed,shift right=2,near end,"\Rplus"',crossing over]%
      \ar[crossing over,rr]%
      &&\Fun(\set{0\to1},\A)%
      \ar[ur]&\\%
      &\B%
      \ar[rr,"F" near start]%
      &&\Fun(\set{1},\A)\\%
      \seccocaGroth{F}%
      \ar[rr]%
      \ar[uu,dashed,shift right=2, near end,"\Rminus"',crossing over]%
      \ar{ur}%
      &%
      \phantom{\Fun(\set{0\to1},\A)}%
      &\Fun(\set{1\to2},\A)%
      \ar[ur]%
      \ar[from=uu,shift right=2,near end,"\Cofib"',crossing over]%
      \ar[uu,shift right=2,near end,"\Fib"',crossing over]&%
      % \phantom{\Fun(\set{0\to1},\A)}%
    \end{tikzcd}%
  \]%
  where the bottom and top squares are the pullback squares
  \eqref{eq:pullback_seccocaGroth} and \eqref{eq:pullback_secGroth},
  respectively. Since $\Cofib$ and $\Fib$ are mutually quasi-inverse
  equivalences, the functoriality of pullbacks implies the existence of the
  desired mutually quasi-inverse equivalences $\Rplus$ and $\Rminus$ which
  render the above cube commutative.
\end{proof}

The statement of \cref{lemma:Grothendieck} has the following interpretations
in terms of the classical concepts of recollements and semi-orthogonal
decompositions.

\begin{remark}
  Recollements were introduced by Be\u{\i}linson, Bern{\v{s}}te{\u\i}n and
  Deligne in \cite{BBD82} in the language of triangulated categories. It has
  since been observed that, when working with enhanced triangulated categories,
  recollements can be recovered from their gluing functors. For example, using
  differential graded categories as enhancements---as proposed in
  \cite{BK90}---the relevant theory is developed systematically in \cite{KL15};
  a treatment in the language of stable $\infty$-categories can be found in
  Appendix~A.8 in \cite{Lur17}. Recall that a recollement is a diagram of stable
  $\infty$-categories and exact functors of the form
  \begin{equation}
    \label{eq:recollement}
    \tikzrecollementdec \A\C\B \recinc\recproj
  \end{equation}
  such that the following conditions are satisfied:
  \begin{enumerate}
  \item The functor $\recinc$ is fully faithful and its essential image is
    precisely the kernel of $\recproj$. Moreover, there are adjunctions
    $\recincL\ladjto\recinc\ladjto\recincR$.
  \item There are adjunctions $\recprojL\ladjto\recproj\ladjto\recprojR$ and the
    functors $\recprojL$ and $\recprojR$ are fully faithful.
  \end{enumerate}
  The \emph{gluing functor} of a recollement of the form \eqref{eq:recollement}
  is the exact functor $\recincR\circ \recprojL\colon\B\to\A$. The results of
  Appendix~A.8 in \cite{Lur17} imply that the forgetful functor which associates
  to a recollement of stable $\infty$-categories its gluing functor induces an
  equivalence of $\infty$-categories between the $\infty$-category of
  recollements and the $\infty$-category $\Fun(\Delta^1,\stinfcat)$ of exact
  functors between stable $\infty$-categories. \cref{const:Grothendieck}
  provides two possible quasi-inverses to this equivalence while
  \cref{lemma:Grothendieck} provides a canonical identification between these
  quasi-inverses. Indeed, for an exact functor $F\colon\B\to\A$ between stable
  $\infty$-categories, the functors $\Rplus$ and $\Rminus$ induce mutually
  quasi-inverse equivalences of recollements
  \begin{equation*}
    \tikzrecollementmapdec\A{\seccaGroth{F}}\B{\recinc}\recproj
    \A{\seccocaGroth{F}}\B{\recinca}\recproja
    {\Sigma_\A}{\Rplus}{1_\B}
    \intxt{and}
    \tikzrecollementmapdec\A{\seccocaGroth{F}}\B{\recinca}\recproja
    \A{\seccaGroth{F}}\B{\recinc}\recproj
    {\Omega_\A}{\Rminus}{1_B}
  \end{equation*}
\end{remark}

\begin{remark}
  Semi-orthogonal decompositions were introduced by Bondal and Kapranov in \cite{BK89} in the
  language of triangulated categories. In their language, the datum of a
  recollement with middle term $\C$ is equivalent\footnote{See for instance
    Proposition~A.8.20 in \cite{Lur17}.} to the datum of the inclusion $\A
  \subset \C$ of an admissible subcategory, that is a subcategory such that the
  inclusion has left and right adjoints. To such a subcategory correspond two
  semi-orthogonal decompositions $\SOD{\A^{\perp}}{\A}$ and
  $\SOD{\A}{{^\perp}\A}$ of $\C$ which are \emph{mutations} of one another. In
  terms of the recollement data, the corresponding orthogonals are given by
  $\A^{\perp} = \recprojR(\B)$ and ${^\perp}\A = \recprojL(\B)$. Let
  $F\coloneqq\recincR\circ\recprojL\colon\B\to\A$ be the gluing functor.
  \begin{enumerate}
  \item The stable $\infty$-category $\seccocaGroth{F}$ is canonically
    equivalent to the $\infty$-category of arrows in $\C$ from $\A^{\perp}$ to
    $\A$ and hence, via the fibre functor, also equivalent to $\C$ itself.
  \item The stable $\infty$-category $\seccaGroth{F}$ is canonically equivalent
    to the $\infty$-category of arrows in $\C$ from $\A$ to $^{\perp}\A$ and
    hence, via the cofibre functor, also equivalent to $\C$ itself.
  \end{enumerate} Hence, the resulting equivalence
  $\seccocaGroth{F}\simeq\seccaGroth{F}$ (which agrees with the one from
  \cref{lemma:Grothendieck}) can be interpreted as passing from the
  description of the category $\C$ in terms of the semi-orthogonal decomposition
  $\SOD{\A^{\perp}}{\A}$ to a description in terms of the mutated decomposition
  $\SOD{\A}{{^\perp}\A}$. In particular, from this perspective, the generalised
  BGP reflection functors constructed below arise from mutations of
  semi-orthogonal decompositions.
\end{remark}

\section{Generalised BGP reflection functors}
\label{sec:generalised}

We fix a \emph{stable} $\infty$-category $\D$ throughout this section. For a
small $\infty$-category $Z$ we denote by $\diagramsin \D Z$ the
$\infty$-category $\Fun(Z,\D)$ of \introduce{$Z$-shaped diagrams in $\D$}. Our
aim is to construct equivalences of the form
\[
  \Rplus\colon%
  \diagramsin{\D}{Z}%
  \lrlas%
  \diagramsin{\D}{\refZ}%
  \noloc\Rminus%
\]
using the functors of \cref{lemma:Grothendieck}, where $\refZ$ is obtained
from $Z$ by \roughly{reflecting some arrows}. When $Z$ is a quiver or a poset
and $\D$ is the derived $\infty$-category of vector spaces over a field, these
equivalences reduce---after passing to homotopy categories---to triangle
equivalences between derived categories of representations. This is a
consequence of the following general fact, see for example Proposition~4.2.4.4
in \cite{Lur09}.

\begin{fact}
  Let $\D_\A$ be the derived $\infty$-category of a Grothendieck category $\A$.
  The derived category of the Grothendieck category $\A^Z$ of $Z$-shaped
  diagrams in $\A$ is equivalent to the homotopy category of the stable
  $\infty$-category $\diagramsin{\D_\A}{Z}$.
\end{fact}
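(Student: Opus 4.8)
The plan is to deduce the equivalence from the general principle, recorded in Proposition~4.2.4.4 of \cite{Lur09}, that forming diagram categories is compatible with passing to the underlying $\infty$-category of a combinatorial model category. First I would present $\D_\A$ as the underlying $\infty$-category of the category $\mathrm{Ch}(\A)$ of chain complexes in $\A$, equipped with the injective model structure; this structure exists and is combinatorial precisely because $\A$ is a Grothendieck abelian category, its weak equivalences are the quasi-isomorphisms, and its homotopy category is by definition the derived category $D(\A)$. That this model-categorical presentation computes the same $\infty$-category as the localisation defining $\D_\A$ is standard, see Section~1.3.5 of \cite{Lur17}; the same discussion applies verbatim to the Grothendieck category $\A^Z$, whose derived $\infty$-category $\D(\A^Z)$ is thus presented by the injective model structure on $\mathrm{Ch}(\A^Z)$.

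The next step identifies chain complexes in the diagram category. There is a canonical isomorphism of abelian categories $\mathrm{Ch}(\A^Z)\cong\Fun(Z,\mathrm{Ch}(\A))$, and, since kernels, cokernels, and hence the cohomology objects in $\A^Z$ are all computed pointwise, a morphism of complexes is a quasi-isomorphism in $\mathrm{Ch}(\A^Z)$ if and only if it is a pointwise quasi-isomorphism. Consequently the injective model structure on $\mathrm{Ch}(\A^Z)$ and the projective diagram model structure on $\Fun(Z,\mathrm{Ch}(\A))$ have the same weak equivalences. As the underlying $\infty$-category of a model structure depends only on its class of weak equivalences, the two present one and the same stable $\infty$-category, which by the first step is $\D(\A^Z)$.

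Finally I would invoke Proposition~4.2.4.4 of \cite{Lur09} for the combinatorial model category $\mathrm{Ch}(\A)$ and the small category $Z$, which identifies the underlying $\infty$-category of the diagram model structure on $\Fun(Z,\mathrm{Ch}(\A))$ with $\diagramsin{\D_\A}{Z}$. Chaining the three steps produces an equivalence $\D(\A^Z)\simeq\diagramsin{\D_\A}{Z}$ of stable $\infty$-categories; passing to homotopy categories then yields the asserted equivalence $D(\A^Z)\simeq\mathrm{Ho}(\diagramsin{\D_\A}{Z})$. I expect the only real obstacle to be bookkeeping rather than conceptual: Proposition~4.2.4.4 is stated for \emph{simplicial} combinatorial model categories, so one must first replace $\mathrm{Ch}(\A)$ by a simplicially enriched model---either through its standard differential-graded enrichment or by Dugger's theorem that every combinatorial model category is Quillen equivalent to a simplicial one---and verify that pointwise quasi-isomorphisms are preserved. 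Once this is arranged, the identification of weak equivalences from the second step does all the remaining work.
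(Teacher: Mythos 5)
Your proof is correct and is in essence the paper's own argument: the paper justifies this Fact by nothing more than the citation ``see for example Proposition~4.2.4.4 in \cite{Lur09}'', and your three steps---presenting $\D_\A$ via the injective model structure on $\mathrm{Ch}(\A)$, identifying $\mathrm{Ch}(\A^Z)\cong\Fun(Z,\mathrm{Ch}(\A))$ with pointwise quasi-isomorphisms as the weak equivalences, and then invoking Lurie's comparison of strict and homotopy-coherent diagrams (with Dugger's theorem, or alternatively the non-simplicial variants in \cite{Lur17}, curing the simplicial-enrichment hypothesis)---are exactly the standard unwinding of that citation. The one point worth making explicit is that your quiet restriction to $Z$ a small \emph{ordinary} category is not mere convenience but necessary and is the intended reading: for a genuinely higher small $\infty$-category $Z$ one has $\A^Z\cong\A^{\mathrm{ho}\,Z}$ because $\A$ is a $1$-category, while $\diagramsin{\D_\A}{Z}$ depends on more than $\mathrm{ho}\,Z$, so the statement as literally written would fail; the paper only ever applies it to quivers and posets, where your argument covers everything.
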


\subsection{The main theorem}

Let $\baseQ=(\baseQ_0,\baseQ_1)$ be a quiver. For a $\baseQ$-shaped diagram
$\lf\colon\baseQ\to\infcat$ of small $\infty$-categories, we denote the
covariant (resp.\ contravariant) Grothendieck construction of $\lf$ by
$\gcocaGroth\lf\baseQ$ (resp. $\gcaGroth\lf\baseQ$); we refer the reader to
Section~3.2.5 in \cite{Lur09} for details\footnote{What we refer to as the
  Grothendieck construction is called the relative nerve in \cite{Lur09}; the
  explicit construction is given in Definition~3.2.5.2.}. Following Definition
4.12 in \cite{Lad08a}, we say that $\baseQ$ is \introduce{bipartite} if there
exists a decomposition $\baseQ_0=\baseQl\disjunion\baseQr$ such that all arrows
are of the form $\elbaseQl\to\elbaseQr$ for $\elbaseQl\in\baseQl$ and
$\elbaseQr\in\baseQr$. If $\baseQ$ is bipartite, then the (small)
$\infty$-categories $\gcocaGroth\lf\baseQ$ and $\gcaGroth\lf\baseQ$ are
characterised\footnote{Using the fact that $\baseQ$ is bipartite, these are
  direct applications of Theorem~7.4 and Corollary~7.6 in \cite{GHN17} (after
  unravelling the notation).\label{fn:Grothendieck_laxlimit}} up to equivalence
in terms of the pushout diagrams in the $\infty$-category $\infcat$ of small
$\infty$-categories
\begin{equation}
  \label{eq:GrothQF}
  \tikzpushouta[tiny]%
  {\coprod\limits_{\alpha\colon\elbaseQl\to\elbaseQr}\lf\elbaseQl}%
  {\coprod\limits_{\elbaseQr\in\baseQr}\lf\elbaseQr}%
  {%
    \coprod\limits_{\elbaseQl\in\baseQl}%
    {\lf\elbaseQl\times{\undercat\elbaseQl\baseQ}}%
  }%
  {\gcocaGroth{\lf}{\baseQ}}%
  \intxt{and}
  \tikzpushouta[tiny]%
  {\coprod\limits_{\alpha\colon\elbaseQl\to\elbaseQr}\lf\elbaseQl}%
  {\coprod\limits_{\elbaseQr\in\baseQr}\lf\elbaseQr}%
  {%
    \coprod\limits_{\elbaseQl\in\baseQl}%
    {\lf\elbaseQl\times{\overcat{\elbaseQl}{\baseQ^{\op}}}}%
  }%
  {\gcaGroth{\lf}{\baseQ}}%
\end{equation}
where $\undercat\elbaseQl\baseQ$ (resp.\ $\overcat{\elbaseQl}{\baseQ^{\op}}$)
denotes the slice category of objects under (resp.\ over) $\elbaseQl$. In the
case $\baseQ=\set{0\to 1}$, corresponding to a functor $f\colon X\to Y$ between
small $\infty$-categories, the $\infty$-categories $\gcocaGroth\lf\baseQ$ and
$\gcaGroth\lf\baseQ$ can be schematically illustrated as follows:
\begin{center}
  \includegraphics%
  [width=0.45\textwidth]%
  {figures/Grothendieck_schematic}%
  \qquad\includegraphics%
  [width=0.45\textwidth]%
  {figures/Grothendieck_schematic_op}%
\end{center}
The coloured regions indicate commutativity relations.

\begin{remark}
  \label{rem:bipartitevsladkani}
  If the diagram $\lf$ takes values in \emph{ordinary} categories, then
  $\gcocaGroth\lf\baseQ$ and $\gcaGroth \lf\baseQ$ agree with (the nerve of) the
  corresponding $1$-categorical Grothendieck constructions. If $\lf$ furthermore
  takes values in posets, then $\gcocaGroth \lf\baseQ$ (resp.\ $\gcaGroth
  \lf\baseQ$) is again a poset if and only if, for every pair
  $\alpha,\beta\colon \elbaseQl\to \elbaseQr$ of parallel arrows in $\baseQ$ and
  every $x\in \lf\elbaseQl$, the elements $(\lf\alpha)(x)$ and $(\lf{\beta})(x)$
  of $\lf\elbaseQr$ have no common upper (resp.\ lower) bound. Under these
  assumptions---which are precisely the assumptions in the main theorem in
  \cite{Lad07}---the Grothendieck construction specialises to the construction
  therein.
\end{remark}

\begin{construction}
  \label{const:F}
  Let $\baseQ$ be a finite bipartite quiver with
  $\baseQ_0=\baseQl\disjunion\baseQr$ and $\lf\colon \baseQ\to \infcat$ a
  $\baseQ$-shaped diagram of small $\infty$-categories. We define
  $\infty$-categories
  \[
    X\coloneqq \coprod_{\elbaseQl\in \baseQl} \lf \elbaseQl%
    \intxt{and}%
    Y\coloneqq \coprod_{\elbaseQr\in \baseQr} \lf\elbaseQr%
  \]
  and an exact functor $F=F_{\lf}\colon\diagramsin{\D}{Y}\to\diagramsin{\D}{X}$
  as the composite
  \[
    \diagramsin{\D}{Y}\xra{\simeq}\bigoplus_{\elbaseQr\in
      \baseQr}\diagramsin{\D}{\lf\elbaseQr} \xra{(-\circ\lf\alpha)_\alpha}
    \bigoplus\limits_{\alpha\colon \elbaseQl\to\elbaseQr}
    \diagramsin{\D}{\lf\elbaseQl} \xra{\oplus} \bigoplus_{\elbaseQl\in
      \baseQl}\diagramsin{\D}{\lf\elbaseQl}\xra{\simeq}\diagramsin{\D}{X}.
  \]
  In particular, for $M\in\bigoplus_{\elbaseQr\in \baseQr}
  \diagramsin{\D}{\lf\elbaseQr}$ and $x\in\lf\elbaseQl$ we have
  \[
    (FM)_{x} =\bigoplus_{\alpha\colon\elbaseQl\to\elbaseQr}M_{(\lf\alpha)(x)}.
  \]
\end{construction}

The following result extends Ladkani's main theorem in \cite{Lad07} from posets
to small $\infty$-categories.

\begin{theorem}
  \label{thm:bipartiteGroth}
  In the setting of \cref{const:F}, there are canonical equivalences of
  stable $\infty$-categories
  \begin{equation}
    \label{eq:thmLF}    
    \seccaGroth{F}\simeq \diagramsin{\D}{\gcocaGroth \lf\baseQ}
    \intxt{and} \seccocaGroth{F}\simeq\diagramsin{\D}{\gcaGroth
      \lf\baseQ}.
  \end{equation}
  In particular, the functors of \cref{lemma:Grothendieck} induce mutually
  quasi-inverse equivalences of stable $\infty$-categories
  \begin{equation}
    \label{eq:Rplus_thm}
    \Rplus\colon%
    \diagramsin{\D}{\gcocaGroth \lf\baseQ}%
    \lrlas%
    \diagramsin{\D}{\gcaGroth \lf\baseQ}%
    \noloc\Rminus.%
  \end{equation}
  We call these equivalences \introduce{generalised BGP reflection functors}.
\end{theorem}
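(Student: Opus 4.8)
The second assertion---the existence of $\Rplus$ and $\Rminus$ in \eqref{eq:Rplus_thm}---is an immediate consequence of \cref{lemma:Grothendieck} once the two equivalences \eqref{eq:thmLF} are in hand, so the plan is to establish the latter. I would prove the first equivalence $\seccaGroth{F}\simeq\diagramsin{\D}{\gcocaGroth\lf\baseQ}$ in detail and then obtain $\seccocaGroth{F}\simeq\diagramsin{\D}{\gcaGroth\lf\baseQ}$ by the dual argument, in which the slices $\undercat\elbaseQl\baseQ$ are replaced by $\overcat{\elbaseQl}{\baseQ^{\op}}$ and cones by cocones throughout.

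The key structural input is that $\diagramsin{\D}{-}=\Fun(-,\D)$ sends colimits of small $\infty$-categories to limits of stable $\infty$-categories; in particular it turns pushouts into pullbacks. Applying this to the left-hand pushout square of \eqref{eq:GrothQF} exhibits $\diagramsin{\D}{\gcocaGroth\lf\baseQ}$ as a pullback. Under this operation the corner $\coprod_\elbaseQr\lf\elbaseQr$ becomes $\bigoplus_\elbaseQr\diagramsin{\D}{\lf\elbaseQr}\simeq\diagramsin{\D}{Y}$, the corner $\coprod_\alpha\lf\elbaseQl$ becomes $\bigoplus_\alpha\diagramsin{\D}{\lf\elbaseQl}$, and the top map becomes the tuple $(-\circ\lf\alpha)_\alpha$ of restriction functors, which is exactly the middle functor in the definition of $F$ in \cref{const:F}.

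The heart of the argument is the identification of the fourth corner $\bigoplus_\elbaseQl\diagramsin{\D}{\lf\elbaseQl\times\undercat\elbaseQl\baseQ}$ and of the functor issuing from it. Since $\baseQ$ is bipartite, the slice $\undercat\elbaseQl\baseQ$ has the identity of $\elbaseQl$ as initial object and admits no non-identity morphisms among the remaining objects---the arrows $\alpha\colon\elbaseQl\to\elbaseQr$---so it is the left cone on the finite discrete set of arrows out of $\elbaseQl$. Hence a diagram $\lf\elbaseQl\times\undercat\elbaseQl\baseQ\to\D$ amounts to an object $N\in\diagramsin{\D}{\lf\elbaseQl}$, namely its restriction to the cone point, together with a morphism $N\to M_\alpha$ for every $\alpha$, and the functor to $\bigoplus_\alpha\diagramsin{\D}{\lf\elbaseQl}$ records the family $(M_\alpha)_\alpha$. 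At this point I would invoke the stability of $\D$: the index set is finite, so the finite product and the finite coproduct over the arrows $\alpha$ with fixed source $\elbaseQl$ agree, and the family $(N\to M_\alpha)_\alpha$ is therefore the same datum as a single morphism $N\to\bigoplus_\alpha M_\alpha$.

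Assembling these identifications over all $\elbaseQl\in\baseQl$ and imposing the pullback condition, which forces $M_\alpha\simeq P_\elbaseQr\circ\lf\alpha$, an object of the pullback is precisely a diagram $P\in\diagramsin{\D}{Y}$, a diagram $N\in\diagramsin{\D}{X}$, and a morphism $N\to F(P)$---the target being $F(P)$ exactly because $(FP)_x=\bigoplus_{\alpha\colon\elbaseQl\to\elbaseQr}P_{(\lf\alpha)(x)}$ is the direct sum produced in the previous step. This is verbatim the description of $\seccaGroth{F}$ in \cref{const:Grothendieck}, and tracing the maps shows that the pullback square obtained coincides with the defining square \eqref{eq:pullback_secGroth}. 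I expect the main obstacle to be coherence bookkeeping rather than any single hard step: one must verify that the two functors into the shared corner $\bigoplus_\alpha\diagramsin{\D}{\lf\elbaseQl}$ really do agree---that restriction to the leaves of the cone reproduces the precomposition functors $-\circ\lf\alpha$ after passage through $F$---and that all of the above identifications are natural enough to upgrade from a matching of objects to an equivalence of $\infty$-categories. Organising the argument entirely through the pullback-of-pushout description, rather than object by object, is what makes this coherence manageable.
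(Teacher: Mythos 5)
Your proposal is correct and follows essentially the same route as the paper's proof: apply $\diagramsin{\D}{-}=\Fun(-,\D)$ to the pushout \eqref{eq:GrothQF} to obtain a pullback square, use the fact that each slice $\undercat\elbaseQl\baseQ$ is a cone on a finite discrete set together with finite biproducts in the stable $\infty$-category $\diagramsin{\D}{\lf\elbaseQl}$ to trade the family of maps $N\to M_\alpha$ for a single map into $\bigoplus_\alpha M_\alpha$, and then compare with the defining square \eqref{eq:pullback_secGroth}. The paper packages your coherence concern exactly as you suggest, by expressing the biproduct identification as a second pullback square pasted onto the first, so that the outer rectangle literally coincides with the square defining $\seccaGroth{F}$.
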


\begin{proof}
  We only establish the existence of the leftmost equivalence in
  \eqref{eq:thmLF}, the other one being analogous. Note that the existence of
  the desired equivalences \eqref{eq:Rplus_thm} then follows immediately from
  \cref{lemma:Grothendieck}.

  Firstly, applying the functor $\diagramsin\D{-}=\Fun(-, \D)$---which
  takes\footnote{%
    Indeed, limits in $\biginfcat$ can be detected after applying the functors
    $\Map_{\biginfcat}(\Z,-)\colon\biginfcat\to\SPACES$ for $\Z\in\biginfcat$
    and the functor $\Map_{\biginfcat}(\Z,\diagramsin{\D}{-})$ is equivalent to
    the representable functor $\Map_{\biginfcat}(-,\diagramsin{\D}{\Z})$ which
    sends colimits to limits.} pushouts in $\infcat$ to pullbacks in
  $\biginfcat$---to the diagram \eqref{eq:GrothQF} we see that the leftmost
  square in the diagram
  \begin{equation}
    \label{eq:inproof_pullback_Groth}
    \begin{tikzcd}[row sep=small]
      \diagramsin\D{\gcocaGroth{\lf}{\baseQ}}\ar[r]\ar[d]\pb& \diagramsin\D%
      {%
        \coprod_{\elbaseQl\in\baseQl}%
        \lf\elbaseQl\times\undercat\elbaseQl\baseQ%
      }%
      \ar[r]\ar[d]& \diagramsin\D%
      {\coprod_{\elbaseQl\in\baseQl}\lf\elbaseQl\times \Delta^1}%
      \ar[d,shift right=3.5em]\simeq\Fun(\Delta^1,\diagramsin{\D}{X})\\%
      \diagramsin\D Y \ar[r]& \diagramsin\D%
      {\coprod_{\alpha\colon\elbaseQl\to\elbaseQr}%
        \lf\elbaseQl} \ar[r]&%
      \diagramsin\D{\coprod_{\elbaseQl\in\baseQl}\lf\elbaseQl\times\set{1}}%
      \simeq\Fun(\set{1},\diagramsin{\D}{X})%
    \end{tikzcd}
  \end{equation}
  is a pullback of $\infty$-categories, where the bottom horizontal composite is
  precisely the functor $F$. Secondly the rightmost square can be identified
  with the pullback square
  \[
    \begin{tikzcd}[row sep=small]
      \bigoplus_{\elbaseQl\in\baseQl}%
      \Fun(\undercat\elbaseQl\baseQ,\diagramsin{\D}{\lf\elbaseQl})%
      \rar\dar\pb%
      &\bigoplus_{\elbaseQl\in\baseQl}%
      \Fun(\Delta^1,\diagramsin{\D}{\lf\elbaseQl})%
      \dar%
      \\\bigoplus_{\elbaseQl\in\baseQl}%
      \bigoplus_{\alpha\colon\elbaseQl\to\elbaseQr}%
      \diagramsin{\D}{\lf\elbaseQl}%
      \rar[swap]{\oplus}%
      &\bigoplus_{\elbaseQl\in\baseQl}%
      \Fun(\set{1},\diagramsin{\D}{\lf\elbaseQl)}%
    \end{tikzcd}
  \]
  Therefore the outer rectangle in~\eqref{eq:inproof_pullback_Groth} is also a
  pullback square. Finally, the claim follows by comparison with the pullback
  square \eqref{eq:pullback_secGroth} defining $\seccaGroth{F}$.
\end{proof}

\subsection{Applications}

We now explain how to recover the main result in \cite{GS18a} (see
\cref{coro:GS} below) as well as further results from \cite{Lad07} (see
\cref{coro:Delta1} and \cref{coro:conereflection} below) as special cases
of \cref{thm:bipartiteGroth}.

We begin by highlighting the most important instance of
\cref{thm:bipartiteGroth}. Let $\KronQ d$ be the $d$-Kronecker quiver, that
is the category with two objects $0,1$ and $d$ parallel arrows $0\to 1$. Let
$f_1,\dots, f_d\colon X\to Y$ be functors between small $\infty$-categories and
denote by $\dcocaGroth fd$ and $\dcaGroth fd$ the covariant and contravariant
Grothendieck constructions of the corresponding $\KronQ d$-shaped diagram of
$\infty$-categories.%, see \cref{fig:multi_Groth_schematic}.

\begin{mycorollary}
  \label{cor:KroneckerGroth}
  There are mutually quasi-inverse equivalences of stable $\infty$-categories
  \[
    \Rplus\colon%
    \diagramsin{\D}{\dcocaGroth fd}%
    \lrlas%
    \diagramsin{\D}{\dcaGroth fd}%
    \noloc\Rminus%
  \]
  induced by the functors of \cref{lemma:Grothendieck}.
\end{mycorollary}
\begin{proof}
  Apply \cref{thm:bipartiteGroth} to the (bipartite) quiver $\KronQ d$.
\end{proof}

\begin{example}
  Let $X=\set{0\to 1}$ and $Y=\set{0\to 1\to 2}$ and consider the functors
  $f_1,f_2\colon X\to Y$ given by $f_1(0)=0$ and $f_1(1)=1$, and
  $f_2(0)=f_2(1)=2$.
  The covariant and contravariant Grothendieck constructions
  $\dcocaGroth f2$ and $\dcaGroth f2$ of the resulting diagram
  $f\colon\KronQ{2}\to\cat$ are as follows:
  \begin{center}
    \includegraphics[width=0.33\textwidth]{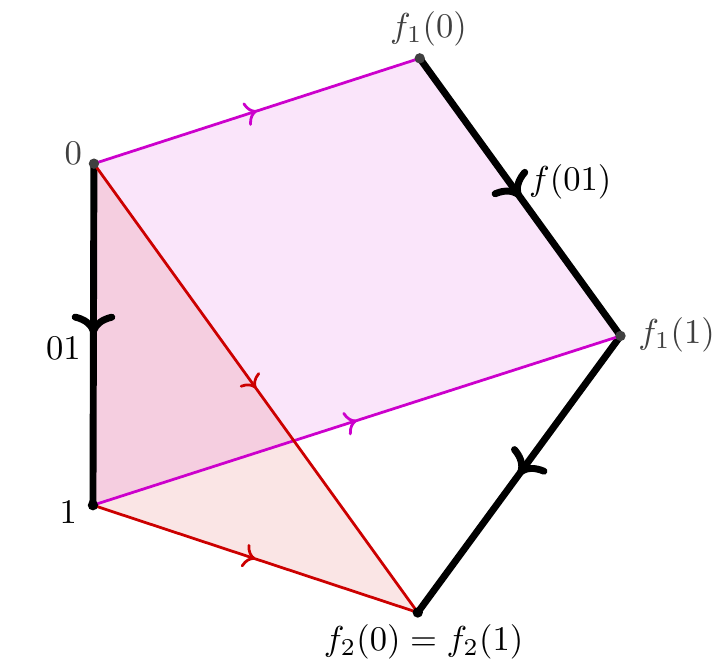}\qquad\qquad
    \includegraphics[width=0.33\textwidth]{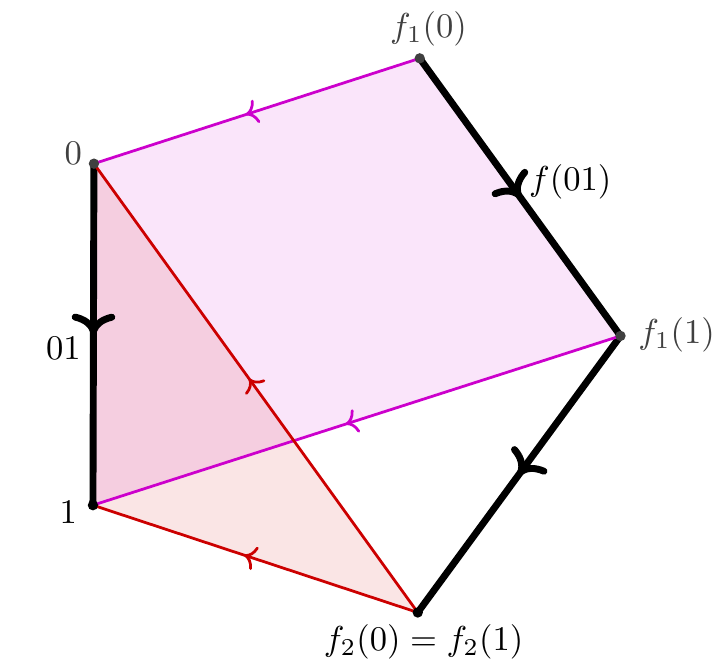}
  \end{center}
  The coloured regions indicate commutativity relations.
\end{example}

The following result recovers the classical BGP reflection functors by letting
$\D$ be the derived $\infty$-category of vector spaces over a field.

\begin{mycorollary}[Classical BGP reflection functors]
  \label{coro:GS}
  Let $Y$ be a small $\infty$-category and choose finitely many objects
  $y_1,\dots, y_d\in Y$ classified by functors $f_1,\dots,f_d\colon\pt\to Y$.
  There are mutually quasi-inverse equivalences
  \[
    \Rplus\colon%
    \diagramsin{\D}{\dcocaGroth fd}%
    \lrlas%
    \diagramsin{\D}{\dcaGroth fd}%
    \noloc\Rminus%
  \]
  induced by the functors of \cref{lemma:Grothendieck}, where
  $f\colon\KronQ{d}\to\infcat$ is the associated $\KronQ{d}$-shaped diagram.
\end{mycorollary}
\begin{proof}
  This is the case of~\cref{cor:KroneckerGroth} where $X$ is a point.
\end{proof}

\begin{remark}
  In the setting of \cref{coro:GS}, the $\infty$-categories $\dcocaGroth fd$
  and $\dcaGroth fd$ are obtained from $Y$ by adjoining a new source (resp.\
  sink) $y$ with \emph{free} arrows $y\to y_i$ (resp.\ $y_i\to y$). In
  particular, if $Y$ is a poset, then $\dcocaGroth fd$ and $\dcaGroth fd$ are
  almost never posets. The functor $\Rplus$ acts as follows: Given a
  representation
  \[
    M\colon\dcocaGroth fd\lra\D
  \]
  the representation
  \[
    \Rplus M\colon\dcaGroth fd\lra\D
  \]
  is given by $(\Rplus M)_{y'}=M_{y'}$ if $y'\neq y$ and
  \[
    (\Rplus M)_y=\cofib\left(M_y\to\bigoplus_i M_{y_i}\right).
  \]
  The action of $\Rminus$ can be described similarly, in terms of the fibre
  functor. This description is in complete analogy with the classical BGP
  reflection functors; moreover, one can show that it agrees with the abstract
  reflection functors of \cite{GS18a}.
\end{remark}

The following result extends Corollary~1.3 in \cite{Lad07} from posets to small
$\infty$-categories.

\begin{mycorollary}
  \label{coro:Delta1}
  Let $f\colon X\to Y$ be a functor between small $\infty$-categories. There are
  mutually quasi-inverse equivalences
  \[
    \Rplus\colon%
    \diagramsin{\D}{\cocaGroth{f}}%
    \lrlas%
    \diagramsin{\D}{\caGroth{f}}%
    \noloc\Rminus%
  \]
  induced by the functors of \cref{lemma:Grothendieck}.
\end{mycorollary}
\begin{proof}
  This is the case $d=1$ of~\cref{cor:KroneckerGroth}.
\end{proof}

\begin{example}
  Let $X=\set{0\to 1\to 2}$ and $Y=\set{0\to 1}$ and consider the functor
  $f\colon X\to Y$ given by $f(0)=f(1)=0$ and $f(2)=1$. The covariant and
  contravariant Grothendieck constructions $\cocaGroth f$ and $\caGroth f$ of
  the induced diagram $f\colon\Delta^1\to\cat$ are as follows:
  \begin{center}
    \includegraphics[width=0.33\textwidth]{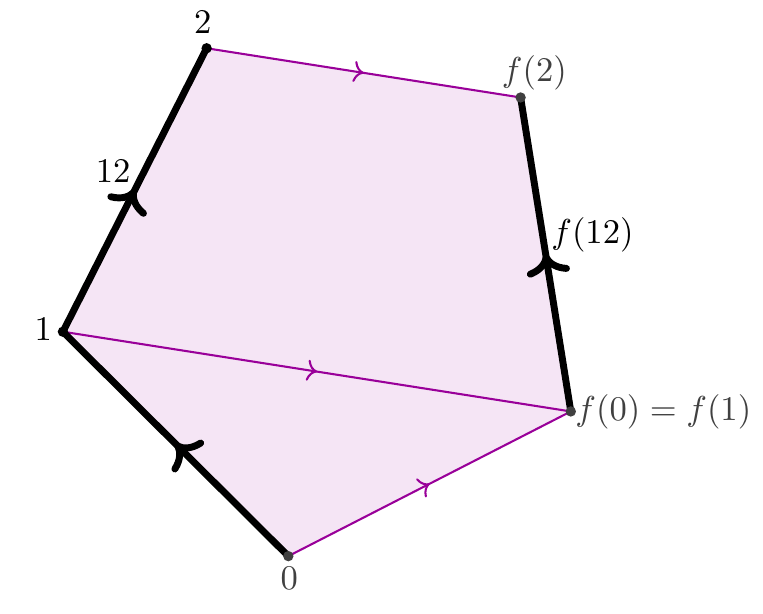}\qquad\qquad
    \includegraphics[width=0.33\textwidth]{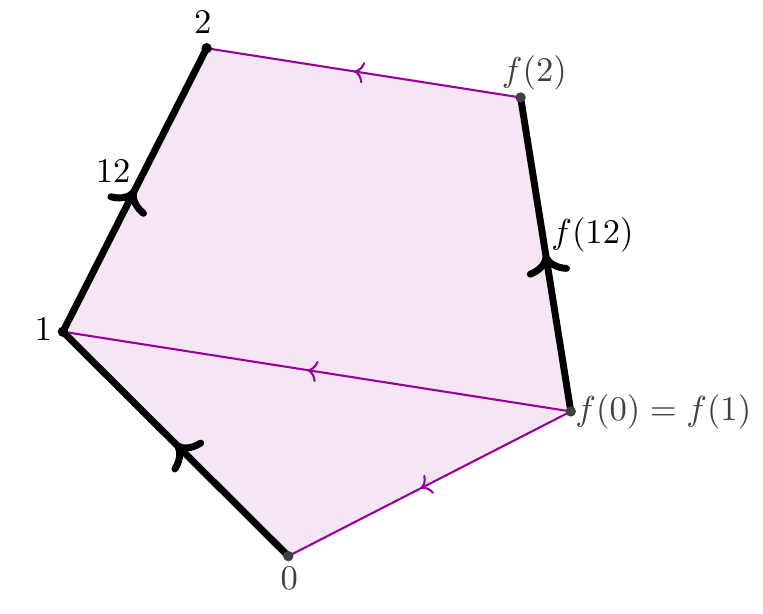}
  \end{center}
  The coloured regions indicate commutativity relations.
\end{example}

The following result extends Corollary~1.5 in \cite{Lad07} from posets to small
$\infty$-categories. Recall that, given an $\infty$-category $X$, there are
$\infty$-categories $X^{\triangleright}$ and $X^{\triangleleft}$ obtained by
adding to $X$ a terminal object $\conept$ or an initial object $\mconept$,
respectively.

\begin{mycorollary}
  \label{coro:conereflection}
  Let $X$ be a small $\infty$-category. There are mutually quasi-inverse
  equivalences
  \[
    \Rplus\colon%
    \diagramsin{\D}{X^{\triangleright}}%
    \lrlas%
    \diagramsin{\D}{X^{\triangleleft}}%
    \noloc\Rminus%
  \]
  induced by the functors of \cref{lemma:Grothendieck}.
\end{mycorollary}
\begin{proof}
  Apply \cref{coro:Delta1} to the unique functor $f\colon X\to\pt$ and note
  that $\cocaGroth{f}$ and $\caGroth{f}$ are equivalent to $X^{\triangleright}$
  and $X^{\triangleleft}$, respectively.
\end{proof}

\begin{remark}
  In the situation of \cref{coro:conereflection} the functor $\Rplus$ acts as
  follows: For a representation
  \[
    M\colon X^{\triangleright}\lra\D,
  \]
  the representation
  \[
    \Rplus M\colon X^{\triangleleft}\lra\D
  \]
  is given by $(\Rplus M)_{\mconept}=M_{\conept}$ and
  \[
    (\Rplus M)_x=\cofib\left(M_x\to M_{\conept}\right)
  \]
  for each object $x$ of $X$. The action of $\Rminus$ can be described
  similarly, in terms of the fibre functor. Note the stark contrast with the
  action of the reflection functors of \cref{coro:GS}, which deals with the
  case of freely adjoined sinks or sources.
\end{remark}

%%% Local Variables:
%%% mode: latex
%%% TeX-master: "master"
%%% End:

\bibliographystyle{amsalpha}
\bibliography{library}

\vspace{2em}

\noindent%
(Dyckerhoff) %
Universit\"{a}t Hamburg, Fachbereich Mathematik, Bundesstra{\ss}e 55, 20146
Hamburg, Germany. E-mail address: \href{mailto:tobias.dyckerhoff@uni-hamburg.de}{\texttt{tobias.dyckerhoff@uni-hamburg.de}}%

\medskip

\noindent%
(Jasso) %
Rheinische Friedrich-Wilhelms-Universit\"{a}t Bonn, Mathematisches Institut,
Endenicher Allee 60, 53115 Bonn, Germany. E-mail address: \href{mailto:gjasso@math.uni-bonn.de}{\texttt{gjasso@math.uni-bonn.de}}%

\medskip    

\noindent%
(Walde) %
Rheinische Friedrich-Wilhelms-Universit\"{a}t Bonn, Mathematisches Institut,
Endenicher Allee 60, 53115 Bonn, Germany. E-mail address: \href{mailto:twalde@math.uni-bonn.de}{\texttt{twalde@math.uni-bonn.de}}%

\end{document}